\newtheorem{Theorem}{\indent Theorem}[section]
\newtheorem{Lemma}[Theorem]{\indent Lemma}
\newtheorem{Corollary}[Theorem]{\indent Corollary}
\theoremstyle{remark}
\newtheorem{Remark}{Remark}
\begin{document}
\centerline{
\bf On $k$-free numbers over Beatty sequences
}
\bigskip
\centerline{Wei Zhang}

\centerline{
School of Mathematics and Statistics,
Henan University,
Kaifeng  475004, Henan, China}
\centerline{zhangweimath@126.com}
\bigskip

\textbf{Abstract}
In this paper, we consider $k$-free numbers over Beatty sequences. New results are given. In particular, for a fixed irrational number $\alpha>1$ of finite type $\tau<\infty$,   any constant $\varepsilon>0$,  we can show that
\begin{align*}
\sum_{\substack{1\leq n\leq x\\ [\alpha n+\beta]\in\mathcal{Q}_{k}}}
1-
\frac{x}{ \zeta(k)} \ll x^{k/(2k-1)+\varepsilon}
+x^{1-1/(\tau+1)+\varepsilon},
\end{align*}
where $\mathcal{Q}_{k}$ is
the set of positive $k$-free integers and the implied constant depends only on $\alpha,$ $\varepsilon,$ $k$ and $\beta.$ This improves previous results. The main new  ingredient of our idea is an employing of the double exponential sums of the type
\[
\sum_{1\leq h\leq H}\sum_{\substack{1\leq n\leq x\\ n\in\mathcal{Q}_{k}}}e(\vartheta hn).
\]

\textbf{Keywords} $k$-free numbers, exponential sums, Beatty sequence

\textbf{2000 Mathematics Subject Classification} 11L07,   11B83

\bigskip
\numberwithin{equation}{section}

\section{\bf{Introduction}}
In this paper, we are interested in  $k-$free integers over Beatty sequences.  The so-called Beatty sequence of integers are defined by
$
\mathcal{B}_{\alpha,\beta}:=\{[\alpha n+\beta]\}_{n=1}^{\infty},
$
where $\alpha$ and $\beta$ are fixed real numbers and $[x]$ denotes the greatest integer not larger than $x$.
The analytic properties of such sequences have been studied by many experts. For example, one can refer to \cite{ABS,BS1,BY} and the references therein.
A number $q$ is called
$k$-free integer  if and only if
$
m^{k}|q\Longrightarrow m=1.
$
For sufficiently large $x\geq1,$ it
is well known that
\[
\sum_{n\in\mathcal{Q}_{k}}n^{-s}=
\frac{\zeta(s)}{\zeta(ks)},\ \ \ \Re s>1
\]
and
\begin{align}\label{kf}
\sum_{n\leq x,\ n\in\mathcal{Q}_{k}}1=
\frac{x}{\zeta(k)}+O(x^{1/k}),
\end{align}
where $\mathcal{Q}_{k}$ is
the set of positive $k$-free integers.
In this paper, we are interested in the sum
\[
\sum_{\substack{1\leq n\leq x\\ [\alpha n+\beta]\in\mathcal{Q}_{k}}}
1.\]
In fact, this problem has been considered by many experts. For example, in 2008, G\"{u}lo\u{g}lu and  Nevans  \cite{GC} proved that
\[
\sum_{\substack{1\leq n\leq x\\ [\alpha n+\beta]\in\mathcal{Q}_{2}}}
1=\frac{x}{ \zeta(2)} +O\left(\frac{x\log \log x}{\log x}\right),\]
where $\alpha>1$ is the
 irrational number of finite type.

%

%
%
Now we will recall some notion related to the type of $\alpha.$ The definition of an irrational number of constant type can be cited as follows.
For an irrational number $\alpha,$ we define its type $\tau$ by the relation
\[
\tau:=\sup\left\{\theta\in\mathbb{R}:\
\liminf_{\substack{q\rightarrow \infty\\ q\in\mathbb{Z}^{+}}}q^{\theta}\parallel
\alpha q\parallel=0\right\}.
\]
Let $\psi$ be a non-decreasing positive function that defined for integers. The irrational number $\alpha$ is said to be of type $<\psi$ if $q\parallel q\alpha\parallel\geq 1/\psi(q)$ holds for every positive integers $q.$ If $\psi$ is a constant function, then an irrational $\alpha$ is also called a constant type (finite type). This relation between these two definitions is that an individual number $\alpha$ is of type $\tau$ if and only if for every constant $\tau$, there is a constant $c(\tau,\alpha)$ such that $\alpha$ is of type $\tau$ with
$q\parallel q\alpha\parallel\geq c(\tau,\alpha)q^{-\tau-\varepsilon+1}.$

Recently, in \cite{Go,Di}, it is proved  that
\[
\sum_{\substack{1\leq n\leq x\\ [\alpha n+\beta]\in\mathcal{Q}_{2}}}
1=\frac{x}{ \zeta(2)} +O\left(Ax^{5/6}(\log x)^{5}\right),\]
where
$
A=\max\{\tau(m), 1\leq m \leq x^{2}\},
$
and $\alpha>1$ is  fixed irrational algebraic number.
More recently, Kim, Srichan and Mavecha \cite{KSM} improved the above result by showing that
\[
\sum_{\substack{1\leq n\leq x\\ [\alpha n+\beta]\in\mathcal{Q}_{k}}}
1=\frac{x}{ \zeta(k)} +O\left(x^{(k+1)/2k}(\log x)^{3}\right).\]


Recently, with some much more generalized arithmetic functions, in \cite{ABS,TZ}, one may also get some other estimates for such type sums. However, the estimates of \cite{ABS,TZ} cannot be applied to an individual $\alpha.$
In this paper, we can give the following formula.
\begin{Theorem}\label{th2}
Let $\alpha>1$ be a fixed irrational number of finite type $\tau<\infty$. Then for any constant $\varepsilon>0$,  we have
\begin{align*}
\sum_{\substack{1\leq n\leq x\\ [\alpha n+\beta]\in\mathcal{Q}_{k}}}
1-
\alpha^{-1}\sum_{\substack{1\leq n\leq [\alpha x+\beta]\\ n\in\mathcal{Q}_{k}}}1
\ll   x^{k/(2k-1)+\varepsilon}+x^{1-1/(\tau+1)+\varepsilon},
\end{align*}
where the implied constant depends only on $\alpha,$ $\varepsilon,$ $k$ and $\beta.$
\end{Theorem}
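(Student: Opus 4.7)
The plan is to reduce the difference in Theorem~1.1 to a saw-tooth exponential sum over $k$-free integers, and then to estimate that sum by exploiting the double-sum structure alluded to in the introduction. Setting $\vartheta = 1/\alpha$ and $\psi(y) = y - \lfloor y \rfloor - \tfrac12$, I would start from the standard indicator identity $\mathbf{1}_{\mathcal{B}_{\alpha,\beta}}(m) = \vartheta + \psi(\vartheta(m-\beta)) - \psi(\vartheta(m-\beta+1))$. Letting $M = [\alpha x + \beta]$, summing this over $m \leq M$ with $m \in \mathcal{Q}_k$ peels off the expected main term $\alpha^{-1} \sum_{m \leq M,\, m \in \mathcal{Q}_k} 1$, so that the theorem reduces to bounding
\[
\Sigma := \sum_{\substack{1 \leq m \leq M \\ m \in \mathcal{Q}_k}} \bigl[\psi(\vartheta(m-\beta)) - \psi(\vartheta(m-\beta+1))\bigr].
\]

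Next, I would apply Vaaler's approximation of $\psi$ by a trigonometric polynomial of degree $H$: $\psi(y) = \sum_{0 < |h| \leq H} a_h e(hy) + R_H(y)$ with $|a_h| \ll 1/|h|$ and $|R_H(y)|$ majorized by a Fej\'er-type polynomial of coefficients $\ll 1/H$. The tail $R_H$ contributes an $O(M/H)$ term (plus a smaller piece absorbed by the finite-type hypothesis on $\vartheta$), and the main contribution becomes
\[
\Sigma_0 = \sum_{0 < |h| \leq H} a_h\bigl(1 - e(\vartheta h)\bigr)\, e(-\vartheta h \beta) \, S(h), \qquad S(h) := \sum_{\substack{m \leq M \\ m \in \mathcal{Q}_k}} e(\vartheta h m),
\]
with $|a_h(1 - e(\vartheta h))| \ll 1/h$. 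The decisive step, following the hint in the abstract, is to treat $\Sigma_0$ as a \emph{joint} exponential sum over $(h,m)$ rather than bounding each $S(h)$ in absolute value individually. Via Abel summation in $h$, the problem reduces to bounding the partial sums
\[
T(H') := \sum_{h \leq H'}\; \sum_{\substack{m \leq M \\ m \in \mathcal{Q}_k}} e(\vartheta h m),
\]
which is precisely the double exponential sum singled out in the introduction. Swapping the order of summation and summing the inner geometric series in $h$ gives $|T(H')| \leq \sum_{m \leq M,\, m \in \mathcal{Q}_k} \min(H',\, \|\vartheta m\|^{-1})$.

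To finish, I would handle the $k$-free constraint by M\"obius inversion $\mathbf{1}_{\mathcal{Q}_k}(m) = \sum_{d^k \mid m}\mu(d)$, truncated at a parameter $D$: the tail $d > D$ contributes $O(HM/D^{k-1})$, while for $d \leq D$ one estimates $\sum_{n \leq M/d^k} \min(H',\, \|\vartheta d^k n\|^{-1})$ by classical Vinogradov-type Diophantine estimates in terms of continued-fraction convergents to $\vartheta d^k$, using the finite-type bound $\|\vartheta q\| \gg q^{-\tau-\varepsilon}$. Combining with the Vaaler error $M/H$ yields a total bound of the shape $M/H + HM/D^{k-1} + (\text{$\tau$-dependent term})$; balancing the first two $\tau$-independent pieces by the choice $D \sim x^{1/(2k-1)}$, $H \sim x^{(k-1)/(2k-1)}$ gives the arithmetic contribution $x^{k/(2k-1)+\varepsilon}$, and the Diophantine piece produces $x^{1-1/(\tau+1)+\varepsilon}$. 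The main obstacle I anticipate is the uniform Diophantine analysis of the inner sums as $d$ ranges up to $D$: the estimate for $\sum_n \min(H',\, \|\vartheta d^k n\|^{-1})$ must depend only mildly on $d$, and this is precisely where the joint double-sum viewpoint on $(h,m)$ must save over the naive coefficient-wise bound $\sum_h |S(h)|/h$ in order to break past the Kim--Srichan--Mavecha exponent $(k+1)/(2k)$.
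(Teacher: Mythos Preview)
Your overall architecture---reduce to a double exponential sum over $k$-free integers via a Fourier/Vaaler expansion, then attack that sum with M\"obius inversion---matches the paper's strategy (the paper uses Vinogradov's $\Psi_\Delta$ in place of Vaaler, a cosmetic difference). The genuine gap is in how you propose to bound the double sum $T(H')$.

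You write: ``Swapping the order of summation and summing the inner geometric series in $h$ gives $|T(H')| \le \sum_{m\le M,\,m\in\mathcal{Q}_k}\min(H',\|\vartheta m\|^{-1})$.'' After this step every term is non-negative, so the $k$-free condition can only be discarded or expanded by M\"obius with absolute values; either way you are left with (a divisor-weighted version of) $\sum_{m\le M}\min(H',\|\vartheta m\|^{-1})$, which by the very Lemma you would invoke is of size $\asymp M\log H'$. Feeding this back through Abel summation gives only $\Sigma_0\ll M$, the trivial bound. Your anticipated obstacle (``uniform Diophantine analysis as $d$ ranges up to $D$'') is therefore not merely technical: handling each $d$ separately via approximations to $\vartheta d^k$ still produces the term $\sum_{d\le D} M/d^k\asymp M$, and no choice of $D$ or $H$ removes it.

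The paper's remedy is to \emph{reverse the order of the two operations}: apply the M\"obius/hyperbolic decomposition $n=m^k l$ \emph{before} any geometric summation, and then sum geometrically in the long complementary variable $l$ (not in $h$). This produces $\min(x/m^k,\|\vartheta h m^k\|^{-1})$, whose first argument decays in $m$; one then merges $h$ and $m^k$ into a single variable $r\le Hy$ (at the cost of a divisor factor $\ll x^\varepsilon$) and applies the standard estimate $\sum_{r\le R}\min(Hx/r,\|\vartheta r\|^{-1})\ll (R+q+Hx/q)\log$, which has no trivial $M$-term. Choosing the hyperbolic cut at $y=x^{k/(2k-1)}$ gives the $x^{k/(2k-1)+\varepsilon}$ contribution, and a single rational approximation to $\vartheta$ (not to each $\vartheta d^k$) with $K^{1/\tau-\varepsilon}\ll q\le K$ yields the $x^{1-1/(\tau+1)+\varepsilon}$ contribution. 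In short: M\"obius first, then sum in $l$; summing in $h$ first throws away exactly the cancellation that beats the exponent $(k+1)/(2k)$.
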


Then by  (\ref{kf}) and the above theorem, we can obtain  the follows.
\begin{Corollary}
Let $\alpha>1$ be a fixed irrational number of finite type $\tau<\infty$. Then for any constant $\varepsilon>0$,  we have
\begin{align*}
\sum_{\substack{1\leq n\leq x\\ [\alpha n+\beta]\in\mathcal{Q}_{k}}}
1-
\frac{x}{ \zeta(k)} \ll  x^{k/(2k-1)+\varepsilon}+x^{1-1/(\tau+1)+\varepsilon},
\end{align*}
where the implied constant depends only on $\alpha,$ $\varepsilon,$ $k$ and $\beta.$
\end{Corollary}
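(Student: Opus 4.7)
The plan is to derive the Corollary as an immediate consequence of Theorem \ref{th2} combined with the classical asymptotic (\ref{kf}) for the counting function of $k$-free integers. The key observation is that the second sum on the left side of Theorem \ref{th2},
\begin{align*}
\alpha^{-1}\sum_{\substack{1\leq n\leq [\alpha x+\beta]\\ n\in\mathcal{Q}_{k}}}1,
\end{align*}
can be evaluated directly by applying (\ref{kf}) with the parameter $y = [\alpha x + \beta]$. So the whole argument reduces to a clean substitution plus a check that the error terms match.

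First I would write $[\alpha x + \beta] = \alpha x + \beta - \{\alpha x + \beta\} = \alpha x + O(1)$, with the implied constant depending only on $\beta$. Substituting this into (\ref{kf}) yields
\begin{align*}
\sum_{\substack{1\leq n\leq [\alpha x+\beta]\\ n\in\mathcal{Q}_{k}}}1 = \frac{[\alpha x + \beta]}{\zeta(k)} + O\bigl([\alpha x + \beta]^{1/k}\bigr) = \frac{\alpha x}{\zeta(k)} + O\bigl(x^{1/k}\bigr),
\end{align*}
where the $O(1)$ contribution from approximating $[\alpha x + \beta]$ by $\alpha x$ is absorbed into $O(x^{1/k})$. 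Dividing by $\alpha$ gives $x/\zeta(k) + O(x^{1/k})$.

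The only bookkeeping step is to verify that $x^{1/k}$ is dominated by the error in Theorem \ref{th2}. One checks
\begin{align*}
\frac{k}{2k-1} - \frac{1}{k} = \frac{k^{2} - (2k-1)}{k(2k-1)} = \frac{(k-1)^{2}}{k(2k-1)} \geq 0,
\end{align*}
with strict inequality for $k \geq 2$, so $x^{1/k} \ll x^{k/(2k-1)+\varepsilon}$ for all admissible $k$. Combining this bound with Theorem \ref{th2} via the triangle inequality yields the claimed asymptotic.

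The dependence of the implied constant is inherited transparently: (\ref{kf}) is uniform, and the only $\alpha, \beta$ dependence entering through the substitution above comes from replacing $[\alpha x + \beta]$ by $\alpha x$, so the final constant depends only on $\alpha, \varepsilon, k, \beta$, as required. There is no substantial obstacle here — the entire analytic content of the Corollary is packaged inside Theorem \ref{th2}, and this deduction is purely mechanical.
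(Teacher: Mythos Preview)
Your proof is correct and follows exactly the route the paper indicates: the paper simply states that the Corollary follows from (\ref{kf}) and Theorem \ref{th2}, and you have spelled out the substitution $[\alpha x+\beta]=\alpha x+O(1)$ together with the check $1/k\le k/(2k-1)$ that makes the absorption of the $O(x^{1/k})$ term legitimate. There is nothing to add.
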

In fact, our result relies heavily on the following double sum.
\begin{Theorem}\label{IK}
Suppose  for some positive integers $a,q,h,$   $q\leq x,$ $h\leq H\ll x,$ $(a,q)=1$ and
\begin{align}\label{DAT}
\left|\vartheta-\frac{a}{q}\right|\leq \frac{1}{q^{2}},
\end{align}
then   for sufficiently large $x$ and any $\varepsilon>0,$ we have
\[
\sum_{1\leq h\leq H}\sum_{\substack{1\leq n\leq x\\ n\in\mathcal{Q}_{k}}}e(\vartheta hn)\ll \left(Hx^{k/(2k-1)}+ q+ Hx/q\right)x^{\varepsilon},
\]
where the implied  constant may depend on $k$ and $\varepsilon.$
\end{Theorem}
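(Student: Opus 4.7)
My plan is to combine the classical Möbius decomposition of the $k$-free indicator with standard Diophantine tools for exponential sums. Using $\mathbf 1_{n\in\mathcal Q_k}=\sum_{d^k\mid n}\mu(d)$ I would first rewrite
\[
S\;=\;\sum_{d\leq x^{1/k}}\mu(d)\sum_{h\leq H}\sum_{m\leq x/d^k}e(\vartheta d^khm),
\]
and split the $d$-sum at the balance parameter $D:=x^{1/(2k-1)}$, chosen so that the trivial tail bound matches the target term $Hx^{k/(2k-1)}$. Write $S=S_1+S_2$ according to $d\leq D$ or $d>D$.

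The tail $S_2$ is handled trivially: bounding each inner geometric sum by $x/d^k$ and summing over $h\leq H$ gives
\[
|S_2|\;\leq\;H\sum_{d>D}\frac{x}{d^k}\;\ll\;\frac{Hx}{D^{k-1}}=Hx^{k/(2k-1)},
\]
which is the first term of the target bound. For the head $S_1$ I would swap the $h$- and $m$-summations, evaluate the innermost sum in $h$ as a geometric progression to get the bound $\min(H,\|\vartheta d^km\|^{-1})$, and re-index $n=d^km$. Since $\#\{d\leq D:d^k\mid n\}\leq\tau(n)\ll n^\varepsilon$, this reduces the problem to estimating
\[
\sum_{n\leq x}\min\bigl(H,\|\vartheta n\|^{-1}\bigr),
\]
to which I would apply the standard one-variable lemma (Iwaniec--Kowalski Lemma 13.7), valid because $|\vartheta-a/q|\leq q^{-2}$ and $(a,q)=1$. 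Its conclusion $(x/q+1)(H+q\log q)$, combined with the tail estimate, produces the target $x^\varepsilon(Hx^{k/(2k-1)}+q+Hx/q)$.

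\emph{Main obstacle.} The delicate step is the final estimate for $S_1$: the lemma produces a cross-term $(x/q)\cdot q\log q=x\log q$ that does not appear in the target bound and has to be absorbed. The cleanest route, I expect, is to keep $S_1$ bilinear — for each $d\leq D$ apply the one-variable lemma to the $h$-sum with phase $\vartheta d^k$, using that the natural Dirichlet approximation $ad^k/q$ has reduced denominator $q/\gcd(d^k,q)$, and then organise the $d$-sum according to $\gcd(d^k,q)$ so that the conservation law $\sum_{d}\gcd(d^k,q)/d^k\ll q^\varepsilon$ collapses the per-$d$ contributions into $Hx/q+q$ without picking up a spurious factor of $D$. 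This bilinear rearrangement, rather than any new analytic input, is where the key savings over the naive single-variable estimate live; the rest of the argument is bookkeeping.
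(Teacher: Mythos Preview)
Your split $S=S_1+S_2$ at $D=x^{1/(2k-1)}$ and the trivial tail bound $|S_2|\le H\sum_{d>D}x/d^k\ll Hx/D^{k-1}=Hx^{k/(2k-1)}$ are correct; this is in fact cleaner than the paper, which uses a three-term hyperbolic decomposition and then has to estimate two auxiliary sums that your direct tail bound renders superfluous.

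The genuine gap is in $S_1$. Summing over $h$ first and re-indexing $n=d^km$ forces the range $n\le x$, and the standard lemma then produces an $M$-term of size $x$ that is \emph{not} dominated by $Hx^{k/(2k-1)}+q+Hx/q$ in general (take $H=1$, $q=x^{1/2}$). Your proposed gcd repair does not close this. The identity $\sum_{d}\gcd(d^k,q)/d^k\ll q^{\varepsilon}$ is true and handles the $Hx/q$-type contribution, but the companion term $q/\gcd(d^k,q)$ equals $q$ whenever $(d,q)=1$, so $\sum_{d\le D}q/\gcd(d^k,q)\asymp qD=qx^{1/(2k-1)}$, not $q$; and if you use the $\min(N,\cdot)$ form of the lemma, the term $\sum_{d\le D}x/d^k\asymp x$ survives as well. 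So neither ordering, kept bilinear in $d$, collapses to the target.

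The paper's resolution is to sum the \emph{long} variable $m$ first, obtaining $\min(x/d^k,\|\vartheta d^k h\|^{-1})$, then to relax $d^k$ to an arbitrary integer $r\le y:=D^k=x^{k/(2k-1)}$ and merge the two \emph{short} variables into a single $n=rh\le Hy$. After a divisor bound, the lemma
\[
\sum_{n\le M}\min\Bigl(\frac{X}{n},\frac{1}{\|\vartheta n\|}\Bigr)\ll (M+q+X/q)\log(2qX)
\]
with $M=Hy$ and $X=Hx$ yields exactly $(Hx^{k/(2k-1)}+q+Hx/q)x^{\varepsilon}$. The essential point you are missing is that after evaluating the geometric sum in the long variable, \emph{both} remaining variables are short, so their product has length $Hy$ rather than $x$; your ordering leaves a long variable outside and this is what generates the unabsorbable $x$.
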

\begin{Remark}
One can also compare this result with the results of  Br\"{u}dern-Perelli \cite{BP} and Tolev  \cite{To}. By using the argument of \cite{BP,To}, one may get some better results for some special cases.
\end{Remark}
\section{Proof of Theorem \ref{th2}}
We will start  the proof by introducing some necessary lemmas.
%
%
%
%

\begin{Lemma}\label{z2}
Let $\alpha>1$ be of finite type $\tau<\infty$ and let $K$ be  sufficiently  large. For an integer $w\geq1,$ there exists $a,q\in\mathbb{N},$  $a/q\in\mathbb{Q}$ with $(a,q)=1$ and $q$ satisfying $K^{1/\tau-\varepsilon}w^{-1}<q\leq K$ such that
\[
\left|\alpha w-\frac{a}{q}\right|\leq \frac{1}{qK}.
\]
\end{Lemma}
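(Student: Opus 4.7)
The plan is to deduce the three required conclusions about $q$ from two ingredients: Dirichlet's approximation theorem, which will supply the approximation inequality together with the upper bound $q \leq K$, and the finite-type hypothesis on $\alpha$, which will force the lower bound on $q$.

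First I would apply Dirichlet's theorem to the real number $\alpha w$ with denominator bound $Q = K$: this produces coprime positive integers $a,q$ with $1 \leq q \leq K$ and
\[
\left|\alpha w - \frac{a}{q}\right| \leq \frac{1}{qK},
\]
which already delivers two of the three required conclusions.

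Next, multiplying through by $q$ gives $|qw\alpha - a| \leq 1/K$, that is $\|qw\alpha\| \leq 1/K$ once $K$ is large. At this point I would invoke the reformulation of the finite-type property recorded in the introduction: for any auxiliary $\eta > 0$ there is a constant $c = c(\eta,\alpha) > 0$ such that $\|m\alpha\| \geq c\, m^{-\tau - \eta}$ for every positive integer $m$. Applying this with $m = qw$ and combining with the previous upper bound yields $c(qw)^{-\tau-\eta} \leq 1/K$, hence
\[
qw \geq c^{1/(\tau+\eta)} K^{1/(\tau+\eta)}.
\]

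The final step is to match exponents against the target $q > K^{1/\tau-\varepsilon} w^{-1}$. I would choose $\eta$ (depending only on $\varepsilon$ and $\tau$) small enough that $1/(\tau+\eta) > 1/\tau - \varepsilon/2$; then for all sufficiently large $K$ the factor $K^{1/(\tau+\eta)}$ exceeds $K^{1/\tau-\varepsilon}$ by a margin large enough to absorb the constant $c^{1/(\tau+\eta)}$, and dividing by $w$ gives the desired lower bound. The only place requiring genuine vigilance is this reconciliation of exponents, but since $\varepsilon$ in the statement is arbitrary we have full freedom in the choice of $\eta$, so this is a routine arithmetic check rather than a real obstacle.
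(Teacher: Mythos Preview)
Your proposal is correct and follows essentially the same route as the paper: apply Dirichlet's theorem to $\alpha w$ with denominator bound $K$, then use the finite-type hypothesis on $\alpha$ to force the lower bound on $qw$. If anything, you are more explicit than the paper about the exponent reconciliation --- the paper jumps directly from $(qw)^{-\tau-\varepsilon}\leq 1/K$ to $q\geq K^{1/\tau-\varepsilon}w^{-1}$, tacitly relying on the same ``rename $\varepsilon$ and enlarge $K$'' manoeuvre that you spell out.
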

\begin{proof}
By Dirichlet approximation theorem, there is a rational number $a/q$ with $(a,q)=1$ and $q\leq K$ such that
$
\left|\alpha w-a/q\right|<1/qK.
$
Then we have
$
\parallel qw\alpha\parallel\leq 1/K.
$
Since $\alpha$ is of type $\tau<\infty,$ for sufficiently large $K,$ we have
$
\parallel qw\alpha\parallel\geq (qw)^{-\tau-\varepsilon}.
$
Then we have
$
1/K\geq \parallel qw\alpha\parallel\geq (qw)^{-\tau-\varepsilon}.
$
This gives that
$
q\geq K^{1/\tau-\varepsilon}w^{-1}.
$
\end{proof}

In order to prove the theorem, we need the definition of the discrepancy. Suppose that we are given a sequence $u_{m},$ $m=1,2,\cdots$, $M$ of points of $\mathbb{R}/\mathbb{Z}.$ Then the discrepancy $D(M)$ of the sequence is
\begin{align}\label{di}
D(M)=\sup_{\mathcal{I}\in[0,1)}
\left|\frac{\mathcal{V}(\mathcal{I},M)}{M}
-|\mathcal{I}|\right|,
\end{align}
where the supremum is taken over all subintervals $\mathcal{I}=(c, d)$ of the interval $[0, 1),$
$\mathcal{V} (\mathcal{I}, M)$ is the number of positive integers $m\leq M$ such that $a_{m}\in\mathcal{I}$, and
$|\mathcal{I}| = d-c$ is the length of $|\mathcal{I}|.$

 Without lose generality, let $D_{\alpha,\beta}(M)$ denote the
discrepancy of the sequence $\{\alpha m+\beta\},$ $m=1,2,\cdots$, $M$,
where $\{x\}=x-[x].$
The following lemma  is from \cite{BS1}.
\begin{Lemma}\label{lere}
Let $\alpha>1.$ An integer $m$ has the form $m=[\alpha n+\beta]$ for some integer  $n$ if and only if
\[
0<\{\alpha^{-1}(m-\beta+1)\}\leq \alpha^{-1}.
\]
The value of $n$ is determined uniquely by $m.$
\end{Lemma}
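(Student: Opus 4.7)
The plan is to unfold the floor-function condition into an interval containment problem on the real line, and then read off when that interval contains an integer in terms of a fractional part.

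First I would use the definition of $[\,\cdot\,]$: the identity $m = [\alpha n + \beta]$ is equivalent to
\[
m \leq \alpha n + \beta < m+1,
\]
and since $\alpha > 0$ this rearranges to
\[
\frac{m-\beta}{\alpha} \leq n < \frac{m-\beta+1}{\alpha}.
\]
So the question reduces to when this half-open interval of length $\alpha^{-1}$ contains an integer $n$, and how many. Because $\alpha > 1$ gives $\alpha^{-1} < 1$, the interval has length strictly less than $1$ and hence contains at most one integer, which immediately yields the uniqueness of $n$ asserted in the lemma.

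For existence, I would set $y := \alpha^{-1}(m-\beta+1)$, so that the interval is exactly $[y-\alpha^{-1},\,y)$. The only possible integer in such an interval is $\lfloor y\rfloor$ when $y\notin\mathbb Z$ (since $\lfloor y\rfloor < y$), and $y-1$ when $y\in\mathbb Z$. In the latter case one would need $y-1 \geq y-\alpha^{-1}$, i.e.\ $\alpha\leq 1$, which is ruled out by hypothesis; this forces the strict lower inequality $\{y\} > 0$. Assuming $\{y\}>0$, the integer $\lfloor y\rfloor$ lies in $[y-\alpha^{-1},y)$ if and only if $\lfloor y\rfloor \geq y - \alpha^{-1}$, which is exactly $\{y\} \leq \alpha^{-1}$.

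Combining the two inequalities gives the condition $0 < \{\alpha^{-1}(m-\beta+1)\} \leq \alpha^{-1}$, proving both directions of the stated equivalence. The only subtle step is the boundary case $\{y\}=0$, which must be excluded separately and accounts for the strict inequality on the left; everything else is bookkeeping with the floor function. I do not anticipate a real obstacle: the whole argument is essentially a careful translation between the defining inequality of $[\,\cdot\,]$ and a statement about fractional parts.
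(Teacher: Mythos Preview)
Your argument is correct: unfolding $m=[\alpha n+\beta]$ into the half-open interval $[y-\alpha^{-1},y)$ with $y=\alpha^{-1}(m-\beta+1)$, checking the boundary case $y\in\mathbb Z$, and reading off the fractional-part condition is exactly the standard derivation, and the uniqueness follows as you say from the interval having length $<1$. The paper does not actually prove this lemma---it simply cites it from Banks and Shparlinski \cite{BS1}---so there is nothing further to compare.
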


\begin{Lemma}[See Theorem 3.2 of Chapter 2 in  \cite{KN}]\label{let}
Let $\alpha$ be a fixed irrational number of type $\tau<\infty.$ Then, for all $\beta\in\mathbb{R},$ we have
\[
D_{\alpha,\beta}(M)\leq M^{-1/\tau+o(1)}
,\ \ (M\rightarrow\infty),
\]
where the function implied by $o(1)$ depends only on $\alpha.$
\end{Lemma}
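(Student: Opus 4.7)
The plan is a standard two-step reduction: apply the Erdős--Turán inequality to pass from the discrepancy to an exponential-sum bound, then invoke the definition of the type $\tau$ to control that sum.

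First, I would apply the Erdős--Turán inequality, which states that for any positive integer $H$,
\[
D_{\alpha,\beta}(M)\ll \frac{1}{H}+\sum_{h=1}^{H}\frac{1}{h}\left|\frac{1}{M}\sum_{m=1}^{M}e\bigl(h(\alpha m+\beta)\bigr)\right|.
\]
The factor $e(h\beta)$ has modulus one and may be pulled outside the inner sum; this simultaneously yields uniformity in $\beta$ and reduces the problem to the purely geometric progression $\sum_{m=1}^{M}e(h\alpha m)$. Summing this progression and using $|1-e(\theta)|\gg \|\theta\|$ gives the standard bound
\[
\left|\sum_{m=1}^{M}e(h\alpha m)\right|\ll \min\!\Bigl(M,\tfrac{1}{\|h\alpha\|}\Bigr).
\]

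Next, the type hypothesis enters: for every $\varepsilon>0$ there is $c(\alpha,\varepsilon)>0$ such that $\|h\alpha\|\geq c\,h^{-\tau-\varepsilon}$ for every integer $h\geq 1$. Substituting back into the Erdős--Turán estimate,
\[
D_{\alpha,\beta}(M)\ll \frac{1}{H}+\frac{1}{M}\sum_{h=1}^{H}h^{\tau+\varepsilon-1}\ll \frac{1}{H}+\frac{H^{\tau+\varepsilon}}{M},
\]
and one optimizes $H$ as a power of $M$. The main obstacle is extracting the sharp exponent $-1/\tau$ rather than the slightly weaker exponent that naive balancing of these two terms would give; this is where the refined argument of Kuipers--Niederreiter enters, exploiting the finer structure of the continued-fraction expansion of $\alpha$ (via the three-distance theorem or a careful use of the best-approximation denominators $q_n$, where the split of $[1,H]$ into blocks determined by the $q_n$ replaces the crude power bound on $\|h\alpha\|$ by a sharper one). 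Since $\varepsilon>0$ is arbitrary, any loss can be absorbed into the $o(1)$ in the final exponent, yielding $D_{\alpha,\beta}(M)\le M^{-1/\tau+o(1)}$. This is precisely Theorem~3.2 of Chapter~2 in \cite{KN}, which the lemma invokes directly.
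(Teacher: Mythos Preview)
The paper supplies no proof of this lemma whatsoever; it is stated purely as a citation of Theorem~3.2, Chapter~2 of \cite{KN}. Your proposal therefore goes well beyond what the paper does, and in the end you too defer to the same reference, so there is no real discrepancy to report.

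That said, a brief remark on your sketch: the Erd\H{o}s--Tur\'an route you outline, with the crude bound $\|h\alpha\|^{-1}\ll h^{\tau+\varepsilon}$, balances to $D_{\alpha,\beta}(M)\ll M^{-1/(\tau+1)+\varepsilon}$, not $M^{-1/\tau+\varepsilon}$ --- exactly the loss you flag. The argument actually given in \cite{KN} does not proceed via Erd\H{o}s--Tur\'an at all; it uses the continued-fraction convergents $p_n/q_n$ of $\alpha$ together with the elementary inequality $MD_{\alpha,\beta}(M)\ll \sum_{i\le n} a_{i+1}$ for $q_n\le M<q_{n+1}$ (essentially the three-distance/Ostrowski machinery you allude to), and then the type hypothesis in the form $q_{i+1}\ll q_i^{\tau+\varepsilon}$ controls both the partial quotients and the number of terms. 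Your proposal correctly identifies where the sharp exponent comes from, even if it does not carry out that step.
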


\begin{Lemma}[see page 32 of \cite{Vi}]\label{levi}
For any $\Delta\in\mathbb{R}$ such that $0<\Delta<1/8$ and $\Delta\leq 1/2\min\{\gamma,1-\gamma\},$ there exists a periodic function $\Psi_{\Delta}(x)$ of period 1 satisfying the following properties:
\begin{itemize}
\item $0\leq$$\Psi_{\Delta}(x)$$\leq1$ for all $x\in\mathbb{R};$

\item $\Psi_{\Delta}(x)=\Psi(x)$ if $\Delta\leq x\leq\gamma-\Delta$ or $\gamma+\Delta\leq x\leq 1-\Delta;$

\item $\Psi_{\Delta}(x)$ can be represented as a Fourier series
\[
\Psi_{\Delta}(x)=\gamma+\sum_{j=1}^{\infty}
g_{j}e(jx)+h_{j}e(-jx),
\]
\end{itemize}
where
 \begin{align*}
 \Psi(x)=
 \begin{cases}
 1\ \ &\textup{if}\ 0<x\leq \gamma,\\
 0\ \ &\textup{if}\ \gamma<x\leq 1,
\end{cases}
 \end{align*}
 and the coefficients $g_{j}$ and $h_{j}$ satisfy the upper bound
\[
\max\{|g_{j}|, |h_{j}|\}\ll \min\{j^{-1},j^{-2}\Delta^{-1}\},\ \ (j\geq1).
\]
\end{Lemma}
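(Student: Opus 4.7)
The plan is to construct $\Psi_\Delta$ explicitly as a convolution of $\Psi$ with a short normalized box kernel; all three listed properties will then drop out of this single construction with minimal effort. Define $K_\Delta(t) := (2\Delta)^{-1}\mathbf{1}_{[-\Delta,\Delta]}(t)$ and extend it by $1$-periodicity (unambiguous because $\Delta < 1/8 < 1/2$), and set
\[
\Psi_\Delta(x) := (\Psi \ast K_\Delta)(x) = \frac{1}{2\Delta}\int_{x-\Delta}^{x+\Delta}\Psi(t)\,dt.
\]
Since $\Psi$ takes values in $[0,1]$ and $K_\Delta$ is a probability density, the inequality $0 \leq \Psi_\Delta(x) \leq 1$ is immediate, settling the first bullet.

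For the agreement property, the hypothesis $\Delta \leq \tfrac{1}{2}\min\{\gamma,1-\gamma\}$ yields $2\Delta \leq \gamma$ and $2\Delta \leq 1-\gamma$. Hence for $\Delta \leq x \leq \gamma-\Delta$ the interval $[x-\Delta,x+\Delta]$ lies in $(0,\gamma]$, where $\Psi \equiv 1$ (altering $\Psi$ on the measure-zero point $\{0\}$ is harmless), so $\Psi_\Delta(x) = 1 = \Psi(x)$. Symmetrically, for $\gamma+\Delta \leq x \leq 1-\Delta$ the interval $[x-\Delta,x+\Delta]$ lies in $(\gamma,1]$, where $\Psi \equiv 0$, so $\Psi_\Delta(x) = 0 = \Psi(x)$. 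This is the second bullet.

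For the Fourier expansion I would invoke the convolution theorem. Because $\Psi_\Delta$ is continuous and Lipschitz (with constant at most $(2\Delta)^{-1}$), its Fourier series converges uniformly to the function itself, so it is enough to bound the coefficients. The constant term equals $\widehat{\Psi}(0)\,\widehat{K_\Delta}(0) = \gamma \cdot 1 = \gamma$, giving exactly the $\gamma$ appearing in the stated expansion. For $j \neq 0$ direct integration gives
\[
\widehat{\Psi}(j) = \frac{1 - e(-j\gamma)}{2\pi i j}, \qquad \widehat{K_\Delta}(j) = \frac{\sin(2\pi j \Delta)}{2\pi j \Delta},
\]
so $|\widehat{\Psi}(j)| \ll 1/j$ while $|\widehat{K_\Delta}(j)| \ll \min\{1,(j\Delta)^{-1}\}$. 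Multiplying, $|\widehat{\Psi_\Delta}(\pm j)| \ll \min\{j^{-1}, j^{-2}\Delta^{-1}\}$ for $j \geq 1$, and taking $g_j := \widehat{\Psi_\Delta}(j)$, $h_j := \widehat{\Psi_\Delta}(-j)$ delivers the third bullet in the exact form stated.

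There is no substantive obstacle here; the proof is essentially bookkeeping once the right kernel is identified. The only design choice worth emphasizing is that the box kernel is precisely the optimal compromise: a smoother (e.g.\ Fejér-type) kernel would yield faster Fourier decay but would destroy the exact equality $\Psi_\Delta \equiv \Psi$ required on the specified intervals, while a rougher or wider kernel would fail to produce the $j^{-2}\Delta^{-1}$ factor. This balance is what makes the two complementary bounds comprising $\min\{j^{-1}, j^{-2}\Delta^{-1}\}$ emerge naturally from the single factorization $\widehat{\Psi_\Delta}(j) = \widehat{\Psi}(j)\,\widehat{K_\Delta}(j)$.
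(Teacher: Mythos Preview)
Your construction is correct. The paper does not supply its own proof of this lemma; it simply quotes the statement from Vinogradov's monograph, so there is no argument in the paper to compare against. Your box-kernel convolution produces exactly the classical trapezoidal approximation that Vinogradov uses, and the three properties follow just as you wrote: boundedness from averaging a $[0,1]$-valued function against a probability kernel, exact agreement on the inner intervals because the averaging window stays inside a plateau of $\Psi$, and the coefficient bound from the factorization $\widehat{\Psi_\Delta}(j)=\widehat{\Psi}(j)\,\widehat{K_\Delta}(j)$ together with $|\widehat{\Psi}(j)|\ll |j|^{-1}$ and $|\widehat{K_\Delta}(j)|\ll\min\{1,(|j|\Delta)^{-1}\}$. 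The absolute summability $\sum_{j}|\widehat{\Psi_\Delta}(j)|<\infty$ (from the $j^{-2}\Delta^{-1}$ bound) already guarantees uniform convergence of the Fourier series to the continuous function $\Psi_\Delta$, so your appeal to Lipschitz continuity, while true, is not even needed.
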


Suppose that $\alpha>1.$ Then we have that $\alpha$ and $\gamma=\alpha^{-1}$ are of the same type. This means that $\tau(\alpha)=\tau(\gamma)$ (see page 133 in \cite{BY}). Let $\delta=\alpha^{-1}(1-\beta)$ and $M=[\alpha x+\beta].$ Then by Lemma \ref{lere}, we have
\begin{align}\label{8}
\begin{split}
\sum_{\substack{1\leq n\leq x\\ [\alpha n+\beta]\in \mathcal{Q}_{k}}}1&=\sum_{\substack{1\leq m\leq M \\ 0<\{\gamma m+\delta\}\leq \gamma\\ m\in \mathcal{Q}_{k}}}1+O(1)\\
&=\sum_{\substack{1\leq m\leq M\\ m\in \mathcal{Q}_{k}}}\Psi(\gamma m+\delta)+O(1),
\end{split}
\end{align}
where $\Psi(x)$ is the periodic function with period one for which
 \begin{align*}
 \Psi(x)=
 \begin{cases}
 1\ \ &\textup{if}\ 0<x\leq \gamma,\\
 0\ \ &\textup{if}\ \gamma<x\leq 1.
\end{cases}
 \end{align*}
By a classical result of Vinogradov (see Lemma \ref{levi}), it is known that for any $\Delta$ such that
$
0<\Delta<1/8\ \textup{and} \ \Delta\leq \min\{\gamma,1-\gamma\}/2,
$
there is a real-valued function $\Psi_{\Delta}(x)$ satisfy the conditions of Lemma \ref{levi}. Hence, by (\ref{8}), we can obtain that
\begin{align}\label{7}
\begin{split}
\sum_{\substack{1\leq n\leq x\\ [\alpha n+\beta]\in \mathcal{Q}_{k}}}1
&=\sum_{\substack{1\leq m\leq M\\ m\in \mathcal{Q}_{k}}}\Psi(\gamma m+\delta)+O(1)\\
&=\sum_{\substack{1\leq m\leq M\\ m\in \mathcal{Q}_{k}}}\Psi_{\Delta}(\gamma m+\delta)+O(1+V(I,M)M^{\varepsilon}),
\end{split}
\end{align}
where $V(I,M)$ denotes the number of positive integers $m\leq M$ such that
\[
\{\gamma m+\delta\}\in I=[0,\Delta)\cup
(\gamma-\Delta,\gamma+\Delta)
\cup(1-\Delta,1).
\]
Since $|I|\ll \Delta,$ it follows from the definition (\ref{di}) and Lemma \ref{let} that
\begin{align}\label{5}
V(I,M)\ll \Delta x+x^{1-1/\tau+\varepsilon},
\end{align}
where the implied constant depends only on $\alpha.$
 By Fourier expansion for $\Psi_{\Delta}(\gamma m+\delta)$ (Lemma \ref{levi}) and changing the order of summation, we have
\begin{align}\label{6}
\begin{split}
\sum_{\substack{1\leq m\leq M\\m\in \mathcal{Q}_{k}}}& \Psi_{\Delta}(\gamma m+\delta)\\
&=\gamma\sum_{\substack{m\leq M\\ m\in \mathcal{Q}_{k}}}1+\sum_{k=1}^{\infty}g_{k}e(\delta k)\sum_{\substack{1\leq m\leq M\\ m\in \mathcal{Q}_{k}}}e(\gamma km)\\
&+\sum_{k=1}^{\infty}h_{k}e(-\delta k)\sum_{\substack{1\leq m\leq M\\ m\in \mathcal{Q}_{k}}}e(\gamma km).
\end{split}
\end{align}
By Theorem \ref{IK}, Lemma \ref{z2} and Lemma \ref{levi}, we see
that for $0<k \ll x^{(4k-4)/(2k-1)+\varepsilon}$,
we have
\begin{align}\label{1}
\begin{split}
\sum_{1\leq k\leq  x^{(4k-4)/(2k-1)+\varepsilon}}g_{k}e(\delta k)\sum_{\substack{1\leq m\leq M\\ m\in \mathcal{Q}_{k}}}e(\gamma km)
\ll  x^{k/(2k-1)+\varepsilon}
+x^{1-1/(\tau+1)+\varepsilon},
\end{split}
\end{align}
where we have also used the fact that
$\alpha$ and $\alpha^{-1}$ are of the same type (finite type).
Similarly, we have
\begin{align}\label{2}
\begin{split}
\sum_{1\leq k\leq  x^{(4k-4)/(2k-1)+\varepsilon}}h_{k}e(-\delta k)\sum_{\substack{1\leq m\leq M\\ m\in \mathcal{Q}_{k}}}e(\gamma km)
\ll x^{k/(2k-1)+\varepsilon}
+x^{1-1/(\tau+1)+\varepsilon},
\end{split}
\end{align}
On the other hand, the trivial bound
\[
\sum_{\substack{1\leq m\leq M\\ m\in \mathcal{Q}_{k}}}e(\gamma km)\ll x
\]
implies that
\begin{align}\label{3}
\begin{split}
\sum_{k\geq x^{(4k-4)/(2k-1)+\varepsilon}}g_{k}e(\delta k)\sum_{\substack{1\leq m\leq M\\ m\in\mathcal{Q}_{k}}}e(\gamma km)&\ll
x^{1+\varepsilon}\sum_{k\geq x^{(4k-4)/(2k-1)+\varepsilon}}k^{-2}\Delta^{-1}\\&\ll x^{k/(2k-1)+\varepsilon}
\end{split}
\end{align}
and
\begin{align}\label{4}
\begin{split}
\sum_{k\geq x^{(4k-4)/(2k-1)+\varepsilon}}h_{k}e(-\delta k)\sum_{\substack{1\leq m\leq M\\ m\in\mathcal{Q}_{k}}}e(\gamma km)&\ll
x^{1+\varepsilon}\sum_{k\geq x^{(4k-4)/(2k-1)+\varepsilon}}k^{-2}\Delta^{-1}\\&\ll x^{k/(2k-1)+\varepsilon}
\end{split}
\end{align}
where $\Delta=x^{-(k-1)/(2k-1)+\varepsilon} .$
Inserting the bounds (\ref{1})-(\ref{4}) into (\ref{6}), we have
\begin{align*}
\sum_{\substack{1\leq n\leq x\\ [\alpha n+\beta]\in\mathcal{Q}_{k}}}
1-\alpha^{-1}
\sum_{\substack{1\leq n\leq [\alpha x+\beta]\\ n\in\mathcal{Q}_{k}}}1
\ll   x^{k/(2k-1)+\varepsilon}+x^{1-1/(\tau+1)+\varepsilon},
\end{align*}
where the implied constant depends on $\alpha,$ $k,$ $\beta$ and $\varepsilon.$ Substituting this bounds and (\ref{5}) into (\ref{7}) and choosing $\Delta=x^{-(k-1)/(2k-1)+\varepsilon}$, we complete the proof of   Theorem \ref{th2}.

\section{Proof of Theorem \ref{IK}}

By Dirichlet hyperbolic method, we have
\begin{align*}
\sum_{1\leq h\leq H}\sum_{\substack{1\leq n\leq x\\ n\in \mathcal{Q}_{k}}}e(\alpha hn)&=
\sum_{1\leq h\leq H}\sum_{1\leq m^{k}\leq y}\mu(m)\sum_{1\leq l\leq x/m^{k}}  e(\alpha m^{k}lh)
\\&+
\sum_{1\leq h\leq H}\sum_{1\leq l\leq x/y} \sum_{1\leq m^{k}\leq x/l}\mu(m)e(\alpha m^{k}lh)\\
&-\sum_{1\leq h\leq H}\sum_{1\leq m^{k}\leq y}\mu(m)\sum_{1\leq l\leq x/y}  e(\alpha m^{k}lh),
\end{align*}
where $y$ is certain parameter to be chosen later. By the well known estimate
\[
\sum_{1\leq n\leq x}e(n\alpha)\leq \min\left(x,\frac{1}{2||\alpha||}\right),
\]
 we have
\begin{align*}
\sum_{1\leq h\leq H}\sum_{1\leq m^{k}\leq y}\mu(m)\sum_{1\leq l\leq x/m^{k}} e(\alpha m^{k}lh)
&\ll
\sum_{1\leq h\leq H}\sum_{1\leq m^{k}\leq y}
\min\left(x/m^{k},\frac{1}{2||\alpha hm^{k}||}\right)\\
&\ll
\sum_{1\leq h\leq H}\sum_{1\leq m \leq y}
\min\left(x/m ,\frac{1}{2||\alpha hm ||}\right)\\
&\ll
 (Hy)^\varepsilon\sum_{1\leq n \leq Hy}
\min\left(Hx/n ,\frac{1}{2||\alpha n ||}\right)\\
\\
&\ll
 (Hy)^\varepsilon \left(Hy +Hx/q+q\right),\\
\end{align*}
where we have used the following lemma.
\begin{Lemma}[See section 13.5 in \cite{IK}]\label{MS}
For
\[
\left|\theta-a/q\right|\leq q^{-2},
\]
$a,q\in\mathbb{N}$ and  $(a,q)=1,$ then we have
\[
\sum_{1\leq n \leq M}\min\left\{\frac{x}{n},\frac{1}{2\parallel n\theta\parallel}\right\}\ll \left(M+q+xq^{-1}\right)\log 2qx,
\]
where $\parallel u\theta\parallel$ denotes the distance of $u$ from the nearest integer.
\end{Lemma}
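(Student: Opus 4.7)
The plan is to exploit the Diophantine approximation $|\theta-a/q|\leq 1/q^{2}$ with $(a,q)=1$ by partitioning the summation range $[1,M]$ into $\lceil M/q\rceil$ consecutive blocks of length $q$ and estimating each block using the equispacing of the residues $\{na/q\pmod 1\}$. Writing $n=(k-1)q+j$ in the $k$-th block, one has $n\theta\equiv\phi_{k}+ja/q+j\epsilon\pmod 1$, where $\phi_{k}=(k-1)q\epsilon$, $\epsilon=\theta-a/q$ and $|j\epsilon|\leq j/q^{2}\leq 1/q$. Since $(a,q)=1$, the residues $\{ja/q\}_{j=1}^{q}$ form an equispaced set on $\mathbb{R}/\mathbb{Z}$ of spacing $1/q$, so the fractional parts $\{n\theta\}$ within a block form a $\phi_{k}$-translate of this equispaced set perturbed by at most $1/q$. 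In particular, for every integer $\ell\in\{1,\ldots,\lfloor q/2\rfloor\}$, a uniform packing argument shows that the number of $j$ in the block with $\|n\theta\|\in((\ell-1)/q,\ell/q]$ is $O(1)$, independently of $k$.

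Using this count, within each block the $O(1)$ "narrow" indices (those with $\|n\theta\|<1/q$) are handled by the trivial bound $x/n\leq x/((k-1)q+1)$, while the "well-separated" indices contribute at most $1/\|n\theta\|\leq q/(\ell-1)$ per annulus and collectively
\[
\ll\sum_{\ell=2}^{\lfloor q/2\rfloor+1}\frac{q}{\ell-1}\ll q\log q.
\]
Summing over $k=2,\ldots,\lceil M/q\rceil$, the two contributions combine to
\[
\sum_{k\geq 2}\Bigl(q\log q+\frac{x}{(k-1)q+1}\Bigr)\ll M\log q+\frac{x}{q}\log(2M/q),
\]
and for the first block ($k=1$) one invokes the sharper Diophantine lower bound $\|n\theta\|\geq (q-n)/q^{2}$ for $n<q$ together with the full minimum $\min(x/n,1/\|n\theta\|)\leq 1/\|n\theta\|\ll q$ to bound its contribution by $O(q\log q+q+x/q)$. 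Adding these pieces yields $\ll(M+q+xq^{-1})\log 2qx$, as claimed.

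The main technical obstacle is the first block: for small $n\leq q$ the unrestricted $x/n$ bound can be as large as $x$, which would be disastrous, and one must instead invoke the minimum structure together with the Diophantine lower bound on $\|n\theta\|$ to obtain a bound of $O(q+x/q)$. Apart from this first-block analysis and the equispacing-and-perturbation count (which is the only non-mechanical input), the argument is a direct computation whose constants only mildly affect the logarithmic factor.
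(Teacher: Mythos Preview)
The paper does not actually prove this lemma; it simply quotes it from section~13.5 of Iwaniec--Kowalski, so there is no in-paper argument to compare against. Your block-of-length-$q$ decomposition together with the equispacing-plus-perturbation count is precisely the standard proof one finds in that reference (or in Vaughan's monograph), and the overall outline is sound.

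One point in your first-block analysis needs tightening. The lower bound $\|n\theta\|\geq (q-n)/q^{2}$ for $1\leq n<q$ is correct, but it does \emph{not} give $1/\|n\theta\|\ll q$ uniformly in $n$: when $n$ is within $O(1)$ of $q$ the reciprocal can be as large as $q^{2}$. What actually controls the $O(1)$ narrow indices in the first block is the interplay of both branches of the minimum. From $\|n\theta\|\geq (q-n)/q^{2}$ one has
\[
\min\Bigl(\frac{x}{n},\frac{1}{2\|n\theta\|}\Bigr)\leq \min\Bigl(\frac{x}{n},\frac{q^{2}}{2(q-n)}\Bigr),
\]
and since $x/n$ is decreasing while $q^{2}/(2(q-n))$ is increasing in $n$, the right-hand side is maximised where they meet, at height $q/2+x/q$. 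Thus each narrow term in the first block contributes $O(q+x/q)$, and with the well-separated terms giving $O(q\log q)$ as in every other block, your claimed bound $O(q\log q+q+x/q)$ for $k=1$ follows. With this adjustment the argument is complete.
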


For the second the sum, we can use the exponential sum for M\"{o}bius function. We have
\[
\sum_{1\leq h\leq H}\sum_{1\leq l\leq x/y} \sum_{1\leq m^{k}\leq x/l}\mu(m)e(\alpha m^{k}lh)\ll Hx/y^{1-1/k}(\log x)^{-A},
\]
where $A$ is any positive constant.
For the third sum we need the following lemma.
\begin{Lemma}[See section 13.5 in \cite{IK}]\label{MS0}
For
\[
\left|\theta-a/q\right|\leq q^{-2},
\]
$a,q\in\mathbb{N}$ and  $(a,q)=1,$ then we have
\[
\sum_{1\leq n \leq M}\min\left\{x,\frac{1}{2\parallel n\theta\parallel}\right\}\ll \left(M+x+Mx/q+q\right)\log 2qx,
\]
where $\parallel u\theta\parallel$ denotes the distance of $u$ from the nearest integer.
\end{Lemma}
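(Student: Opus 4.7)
The plan is to exploit the Diophantine condition $|\theta - a/q|\leq q^{-2}$ with $(a,q)=1$ by partitioning $\{1,2,\ldots,M\}$ into $R=\lceil M/q\rceil$ consecutive blocks of length at most $q$ and estimating each block separately. On a block of $q$ consecutive integers the residues $na\bmod q$ form, by coprimality, a permutation of $\{0,1,\ldots,q-1\}$, so the values $\parallel na/q\parallel$ take each element of $\{j/q:1\leq j\leq \lfloor q/2\rfloor\}$ at most twice and the value $0$ at most once. The aim is then to control how closely $\parallel n\theta\parallel$ mimics this well-distributed finite set.

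Fixing a block $B=\{n_{0}+1,\ldots,n_{0}+q\}$ and writing $\theta = a/q+\eta$ with $|\eta|\leq q^{-2}$, the drift of $n\theta$ away from $na/q$ across $B$ is at most $q|\eta|\leq 1/q$. Hence for indices with $\parallel na/q\parallel = j/q$, $j\geq 2$, one has $\parallel n\theta\parallel\geq (j-1)/q$, and these contribute to $\sum_{n\in B}\min\{x, 1/(2\parallel n\theta\parallel)\}$ at most $2\sum_{j=2}^{\lfloor q/2\rfloor}q/(j-1)\ll q\log q$. At most $O(1)$ exceptional indices per block can satisfy $\parallel n\theta\parallel<1/(3q)$; for each I would apply the trivial bound $\min\{x, 1/(2\parallel n\theta\parallel)\}\leq x$, giving an extra $O(x)$ per block.

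Summing over the $R=O(M/q+1)$ blocks yields
\[
\sum_{1\leq n\leq M}\min\!\left\{x,\, 1/(2\parallel n\theta\parallel)\right\}\ll (M/q+1)(x+q\log q)\ll (M+x+Mx/q+q)\log(2qx),
\]
which is the claimed bound. The main obstacle is justifying that each block carries only $O(1)$ exceptional indices: the standard argument is that if $\parallel m\theta\parallel,\parallel n\theta\parallel<1/(3q)$ for $m,n\in B$, then $\parallel(m-n)\theta\parallel<2/(3q)$, and combining with $|(m-n)\eta|\leq 1/q$ reduces this to $\parallel(m-n)a/q\parallel = 1/q$; by coprimality of $a$ and $q$ this equation admits only $O(1)$ solutions in $|m-n|<q$. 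Everything else—the harmonic sum within a block and aggregation over blocks—is routine, and the final $\log(2qx)$ factor simply absorbs the $\log q$ together with cheap boundary adjustments.
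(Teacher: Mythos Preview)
The paper does not prove this lemma; it is quoted from Iwaniec--Kowalski \S13.5, so there is no in-paper argument to compare against. Your block-of-length-$q$ strategy is the standard one, and your treatment of the exceptional indices (at most $O(1)$ per block with $\|n\theta\|<1/(3q)$) is sound, since there you work with \emph{differences} $m-n$ and legitimately bound $|(m-n)\eta|\le 1/q$.

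The gap is in the non-exceptional estimate. From ``the drift of $n\theta$ away from $na/q$ across $B$ is at most $q|\eta|\le 1/q$'' you infer that $\|na/q\|=j/q$ with $j\ge 2$ forces $\|n\theta\|\ge (j-1)/q$. That step needs $\|n\eta\|\le 1/q$, not merely that $n\eta$ \emph{varies} by at most $1/q$ across the block; for $n$ in a late block $|n\eta|$ can be as large as $M/q^{2}$ and $\|n\eta\|$ can be anything in $[0,1/2]$. A concrete failure: $q=10$, $a=1$, $\eta=1/100$, $n=45$ (in the block $\{41,\dots,50\}$) gives $\|na/q\|=1/2$ (so $j=5$) but $\|n\theta\|=\|4.95\|=0.05<4/10$. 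The fix is to absorb the block-constant part $n_{0}\eta$ into the reference grid: writing $n=n_{0}+s$ with $1\le s\le q$ one has $\{n\theta\}=\{r_{n}/q+n_{0}\eta+s\eta\}$ with $|s\eta|\le 1/q$, and the points $\{j/q+n_{0}\eta\}_{j=0}^{q-1}$ are still $1/q$-equispaced in $[0,1)$. Hence for every $k\ge 1$ the number of $n\in B$ with $\|n\theta\|\in[(k-1)/(2q),k/(2q))$ is $O(1)$, and summing $\min\{x,q/k\}$ over $k=1,\dots,q$ recovers the $O(x+q\log q)$ per block that feeds into your final display.
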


By Lemma \ref{MS0}, we have
\begin{align*}
\sum_{1\leq h\leq H}\sum_{1\leq m^{k}\leq y}\mu(m)\sum_{1\leq l\leq x/y} e(\alpha m^{k}lh)
&\ll
\sum_{1\leq h\leq H}\sum_{1\leq m^{k}\leq y}
\min\left(x/y,\frac{1}{2||\alpha hm^{k}||}\right)\\
&\ll
\sum_{1\leq h\leq H}\sum_{1\leq m \leq y}
\min\left(x/y ,\frac{1}{2||\alpha hm ||}\right)\\
&\ll
 (Hy)^\varepsilon\sum_{1\leq n \leq Hy}
\min\left(x/y ,\frac{1}{2||\alpha n ||}\right)\\
\\
&\ll
 (Hy)^\varepsilon \left(Hy +x/y+Hx/q+q\right).\\
\end{align*}

Choosing $y=x^{k/(2k-1)}$ completes the proof of Theorem \ref{IK}.

\bigskip
$\mathbf{Acknowledgements}$
I am deeply grateful to the referee(s) for carefully reading the manuscript and making useful suggestions.

\address{Wei Zhang\\ School of Mathematics\\
               Henan University\\
               Kaifeng  475004, Henan, China}
\email{zhangweimath@126.com}

\begin{thebibliography}{99}

\bibitem{ABS}
A. G. Abercrombie, W. D. Banks, and I. E. Shparlinski, {\it Arithmetic functions on Beatty sequences.} Acta Arith. 136 (2009), 81-89. Zbl 1227.11045, MR2469945





\bibitem{BS1}W.D. Banks and I.E. Shparlinski, {\it Short character sums with Beatty sequences}. Math. Res. Lett. 13 (2006),  539-547. Zbl 1220.11097, MR2250489


%
%
\bibitem{BY}
W.D. Banks and A.M. Yeager, {\it
Carmichael numbers composed of primes from a Beatty sequence.}
Colloq. Math. 125 (2011), 129-137. Zbl 1276.11151, MR2860586


\bibitem{BP}
J. Br\"{u}dern and A. Perelli, {\it Exponential sums and additive problems involving square-free numbers.} Ann. Scuola Norm. Sup. Pisa Cl. Sci. 28 (1999) 591-613. Zbl 1019.11028 , MR1760532




\bibitem{Di}
S.I. Dimitrov, {\it On the distribution of consecutive square-free numbers of the form $\lfloor\alpha n\rfloor,\lfloor\alpha n\rfloor+1$.} Proc. Jangjeon Math. Soc. 22 (2019), 463-470. Zbl 1428.11163, MR3994243

\bibitem{Go}
D.V. Goryashin, {\it Squarefree numbers in the sequence $\lfloor \alpha n\rfloor$.} Chebyshevskii Sb. 14(3)(2013), 42-48. Zbl 1430.11130



\bibitem{GC}
A. G\"{u}lo\u{g}lu,   C. Nevans,
{\it Sums of multiplicative functions over a Beatty sequence.}
Bull. Aust. Math. Soc. 78(2008),  327-334. Zbl 1228.11151, MR2466868



\bibitem{IK}
H. Iwaniec and E. Kowalski, {\it Analytic Number Theory}. Am. Math.
 Soc. Colloquium Publ. vol.53.Am. Math. Soc., Providence, 2004.




\bibitem{KSM}
V. Kim,  T. Srichan, S. Mavecha, {\it
On r-free integers in Beatty sequences.}
Bol. Soc. Mat. Mex. (3) 28 (2022), no. 2, Paper No. 28, 10 pp. Zbl 07493131, MR4395131

\bibitem{KN}
L. Kuipers and H. Niederreiter, {\it Uniform distribution of sequences},
Wiley-Interscience, New York-London-Sydney, 1974.




















\bibitem{TZ}
M. Technau and   A. Zafeiropoulos,
{\it Metric results on summatory arithmetic functions on Beatty sets.}
Acta Arith. 197 (2021),  93-104. Zbl 1465.11077, MR4185917



\bibitem{To}
D.I. Tolev,{\it
On the exponential sum with square-free numbers.}
Bull. London Math. Soc. 37(2005), no. 6, 827-834. Zbl 1099.11042, MR2186715

\bibitem{Vi}
I.M. Vinogradov, {\it The Method of Trigonometrical Sums in the Theory of Numbers.} Dover, New York (2004).
 MR2104806
























\end{thebibliography}
\end{document}